\documentclass{article}

\usepackage{graphics}
\usepackage{graphicx}
 \usepackage{epsfig}
\usepackage{amssymb}
 \usepackage{amsthm}
\usepackage[T1]{fontenc}
\usepackage[latin1]{inputenc}
\usepackage{lmodern}
\usepackage{amsfonts}
\usepackage{amssymb}
\usepackage{amsmath}
\usepackage{enumerate}
\usepackage[english]{babel}
\usepackage{wasysym}
\usepackage{xspace}
\usepackage[figuresright]{rotating}
\usepackage{lineno}

\sloppy\tolerance=10000 \addtolength{\oddsidemargin}{-10mm}
\addtolength{\textwidth}{20mm} \addtolength{\textheight}{10mm}

\newtheorem{theorem}{Theorem}

\newtheorem{lemma}[theorem]{Lemma}

\title{A sufficient condition for the existence of plane spanning trees on geometric graphs\thanks{Partially supported by Conacyt, M{\'e}xico.} }

\author{Eduardo Rivera-Campo\thanks {Corresponding author.} 
and Virginia Urrutia-Galicia\thanks {Departamento de Matem\'aticas, Universidad Aut\'onoma Metropolitana-Iztapalapa, Av. San Rafael Atlixco 186, M{\'e}xico D.F., C.P. 09340, \{erc, vug\}@xanum.uam.mx.}}

\date{}

\begin{document}
\maketitle

\begin{abstract}
\noindent Let $P$ be a set of $n \geq 3$ points in general position in the plane and let $G$ be a geometric graph with vertex set $P$.
If the number of empty triangles $\bigtriangleup u v w$ in $P$ for which the subgraph of $G$ induced by $\{u,v,w\}$
is not connected is at most $n-3$, then $G$  contains a non-self  intersecting spanning tree.

\end{abstract}

\begin{quotation}
\noindent  {\bf Keywords.  Geometric Graph.  Plane Tree. Empty Triangle.}
\end{quotation}

\section{Introduction}

Throughout  this article $P$ denotes a set of $n\geq 3$ points in general position in the Euclidean
plane. A geometric graph with vertex set $P$ is a graph $G$ drawn in such a way that each edge is a straight line
segment with both ends in $P$. A \emph{plane spanning tree} of $G$
is a non-self  intersecting subtree of $G$ that contains every
vertex of $G$. Plane spanning trees with or without specific conditions have been studied by various authors.

A well known result of K{\'a}rolyi  \emph{et al}  \cite{KaPaTo}
asserts that if the edges of a finite complete geometric graph $GK_n$  are coloured by two colours, then there exists a plane spanning tree of $GK_n$ all of whose edges are of the same colour. Keller  \emph{et al}  \cite{KePeRiUr}  characterized those plane spanning trees $T$ of $GK_n$ such that the complement graph $T^c$ contains no plane spanning trees.

A plane spanning tree $T$ is a  \emph{geometric independency  tree} if for each pair $\{ u,v \} $ of leaves of $T$, there is an edge  $xy$ of  $T$ such that the segments $uv$ and $xy$ cross each other. Kaneko  \emph{et al}  \cite{KaOdYo}  proved  that every complete geometric graph with $n \geq 5$ vertices contains a geometric independency tree with at least  $\frac{n}{6}$  leaves.

Let $k$ be an integer with $2 \leq k \leq 5$ and $G$ be a geometric graph with $n \geq k$ vertices such that  all geometric subgraphs of $G$ induced by $k$ vertices have a plane spanning tree. Rivera-Campo  \cite{Ri} proved that $G$ has a plane spanning tree.

Three points $u, v$ and $w$ in $P$ form an \emph{empty triangle}  if no point of $P$ lies in the interior of the triangle
$\bigtriangleup u v w$.  For any geometric graph $G$ with vertex set $P$ we say that an empty triangle $\bigtriangleup u v w$ of $P$  is \emph{disconnected} in $G$ if the subgraph of $G$ induced by $\{u, v, w\}$ is not connected.

Let  $s(G)$ denote the number of disconnected empty triangles of $G$. Our result is the following:

\begin{theorem} \label{mainresult}

If $G$ is a geometric graph with $n \geq 3$ vertices such that $s(G) \leq
n-3$, then $G$ has a plane spanning tree.

\end{theorem}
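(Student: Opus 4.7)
The plan is to proceed by induction on $n$. The base case $n = 3$ is immediate: the three points of $P$ form a single empty triangle, which by $s(G) \leq 0$ must be connected in $G$, so $G$ contains at least two of its edges, yielding a plane spanning path.

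For the inductive step, assume the theorem holds for point sets of size less than $n$ and let $G$ be as in the hypothesis with $n \geq 4$. I would look for a convex hull vertex $v$ of $P$ together with a hull neighbor $u$ of $v$ such that $vu \in G$ and $d(v) \geq s(G) - (n - 4)$, where $d(v)$ denotes the number of disconnected empty triangles of $P$ that contain $v$. Once such $v$ is found, the induction closes cleanly. Because $v$ is an extreme point of $\mathrm{conv}(P)$, it cannot lie in the interior of any triangle spanned by $P \setminus \{v\}$; hence the empty triangles of $P \setminus \{v\}$ are exactly those of $P$ that avoid $v$, and therefore $s(G - v) = s(G) - d(v) \leq n - 4 = (n-1) - 3$. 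The inductive hypothesis supplies a plane spanning tree $T'$ of $G - v$, and since $vu$ is a hull edge of $P$ it meets no segment between two points of $P$ in its interior, so $T' \cup \{vu\}$ is the desired plane spanning tree of $G$.

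The crux is the existence of such a hull vertex $v$, which I would tackle by contradiction, assuming every hull vertex of $P$ either has no incident hull edge of $P$ in $G$ or fails the lower bound on $d(v)$. The counting would proceed via missing hull edges: each hull edge $v_i v_{i+1}$ is a side of at least one empty triangle $\triangle v_i v_{i+1} w$ (take the ear triangle in any triangulation of $P$), and if $v_i v_{i+1} \notin G$ this empty triangle is disconnected in $G$ unless both $v_i w$ and $v_{i+1} w$ lie in $G$. Aggregating such contributions across the hull cycle, and combining with the assumed failure of the $d(v)$ bound at the hull vertices incident to hull edges of $G$, should force $s(G) > n - 3$, contradicting the hypothesis.

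The main obstacle is controlling this count when $P$ has many interior points: a single interior point may serve as the apex $w$ of several ear triangles at once, so if enough incident non-hull edges of $G$ are present the naive lower bound on $s(G)$ extracted from missing hull edges gets absorbed. To get around this, I expect to have to relax the requirement that $vu$ be a hull edge, allowing any edge $vu \in G$ that can be safely appended to the inductively produced tree on $P \setminus \{v\}$; this calls for a visibility argument in the "cave" region $\mathrm{conv}(P) \setminus \mathrm{conv}(P \setminus \{v\})$, guaranteeing under $s(G) \leq n - 3$ that some hull vertex $v$ and suitable edge $vu$ always exist. This is the delicate part of the argument, where the specific bound $n - 3$ is expected to be tight.
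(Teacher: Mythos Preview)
Your reduction step is clean, but the ``crux'' you identify is not just delicate---it is false as stated, and the relaxation you suggest does not save it. Take $n=7$ with three hull vertices $A,B,C$ forming a large triangle and four interior points $p_1,p_2,p_3,p_4$ in convex position inside $\triangle ABC$ (labelled cyclically). Let $G$ contain \emph{every} edge incident to $A$, $B$, or $C$, and among the interior points only the two diagonals $p_1p_3$ and $p_2p_4$. Every triangle with at least one hull vertex has two $G$-edges at that vertex and is connected; the only disconnected empty triangles are the four ears $\triangle p_ip_{i+1}p_{i+2}$ (indices mod $4$), each carrying exactly one $G$-edge. Thus $s(G)=4=n-3$, so the hypothesis holds, yet $d(A)=d(B)=d(C)=0$. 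Removing any hull vertex $v$ leaves $s(G-v)=4>(n-1)-3$, and since all edges from $v$ are already in $G$, your proposed relaxation (allowing any cave-visible edge instead of a hull edge) changes nothing. So no hull vertex can serve as the peeled vertex, and the induction stalls. The obstruction is structural: disconnected empty triangles can live entirely among interior points, invisible to the hull-peeling bookkeeping.

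The paper's proof avoids this by not peeling a single vertex at all. It uses the Erd\H{o}s--Lov\'asz--Simmons--Straus rotating-line procedure to sweep through all near-bisecting lines of $P$, tracking $s(G^-)$ and $s(G^+)$ on the two sides. A parity/continuity argument over the rotation locates a line (or a line through two points) where both sides simultaneously satisfy $s\le |\text{side}|-3$; induction then gives plane spanning trees on each side, and their union (sharing a point on the line) is a plane spanning tree of $G$. The key technical point is a lemma controlling when the rotating line first crosses a disconnected empty triangle, which is what makes the delicate Case~2 (both sides initially failing by exactly one) go through. This halving strategy is genuinely different from vertex deletion and is what handles configurations like the one above.
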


For each $n \geq 3$, let $u_1, u_2, \dots, u_n$ be the vertices of
a regular $n$-gon and denote by $T_n$ and $T_n^c$ the plane path $ u_1, u_2,
\dots,  u_n$ and its complement, respectively. The geometric graph  $T_n^c$ contains no plane spanning tree and is such that $s(T_n^c) =n-2$. This shows that the condition in Theorem~\ref{mainresult} is tight.

\section{Proof of Theorem~\ref{mainresult}}

For every oriented straight line $L$ we denote by $L^-$ the set of
points in $P$ which are on or to the left of $L$ and by $L^+$ the
points which are on or to the right of $L$.

A $k$-set of $P$ is a subset $X$ of $P$ with $k$ elements that can be obtained by intersecting $P$ with an open half plane. The main tool in the proof of Theorem~\ref{mainresult} is the
following procedure of Erd\H{o}s \emph{et al}  \cite{Lo},  \cite{ErLoSiSt}, 
used to generate all $k$-sets of $P$: Let  $L=L_1$  be an oriented line passing through precisely one point $v_1$ of $P$ with $|L^-_1|=k+1$ .  Rotate $L$ clockwise around the
axis $v_1$ by an angle $\theta$  until  a point $v_2$ in $P$ is
reached. Now rotate $L$ in the same direction but around
$v_2$ until a point $v_3$ in $P$ is reached, and continue rotating
$L$ in a similar fashion obtaining a set of oriented lines $C(L)$
and a sequence of points $v_1, v_2, \ldots, v_s$, not necessarily
distinct, where  $v_s= v_1$ when the angle of rotation $\theta$
reaches $2\pi$.

For $i=1,2, \ldots, s-1$, let $L(v_i, v_{i+1})$ be the line in
$C(L)$ that passes through points $v_i$ and $v_{i+1}$ and for
$i=2,3, \ldots, s-1$, let $L_i$ be any line in $C(L)$ between
$L(v_{i-1}, v_i)$ and $L(v_i, v_{i+1})$.

It is well known that for each line $L_j$ either $L^+_{j+1}= L^+_j$
and $L^-_{j+1}= (L^-_j \setminus \{v_j\}) \cup \{v_{j+1}\}$, or
$L^+_{j+1}= (L^+_j \setminus \{v_j\}) \cup \{v_{j+1}\}$ and
$L^-_{j+1}= L^-_j$. In both cases $|L^-_{j+1}|= |L^-_j|=k+1$ and  $|L^+_{j+1}|= |L^+_j|=n-k$. 
It is also easy to see that if $v_{j+1} \in
L^+_j$, then $L^-(v_j , v_{j+1})= L^-_j  \cup \{v_{j+1}\}$ and
$L^+(v_j , v_{j+1})= L^+_j$, and if  $v_{j+1} \in L^-_j$, then
$L^-(v_j , v_{j+1})= L^-_j$ and $L^+(v_j , v_{j+1})= (L^+_j
\setminus \{v_j\}) \cup \{v_{j+1}\}$.

The following lemma will used in the proof of Theorem~\ref{mainresult}.

\begin{lemma} \label{crosses}

Let $L_i, L_j \in C(L)$ with $i < j $. If $x$,  $y$ and $z$ are
points of $P$ lying in $L^+_i \cap L^-_j$, then there are integers
$k$ and $l$ with $i \leq k < l < j$ such that $v_k \in \{x,y,z\}$,
$x,y,z \in L^+_k \cap L^-_j$ and such that $L_l$ crosses the
triangle $\bigtriangleup x y z$.

\end{lemma}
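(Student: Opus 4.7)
The plan is to use the rotation procedure to track, step by step, how each of $x$, $y$, $z$ moves from the $L^+$-side to the $L^-$-side. The two transition formulas displayed above imply that for any $p \in P$ with $p \notin \{v_m, v_{m+1}\}$, $p \in L^-_m$ iff $p \in L^-_{m+1}$ and $p \in L^+_m$ iff $p \in L^+_{m+1}$. Hence a point can change sides only at a transition in which it appears as pivot. Since $x, y, z$ lie in $L^+_i$ but in $L^-_j$, each of them must undergo a ``crossing'' pivot, meaning an index $m \in [i, j]$ with $p = v_m$ at which $p$ transitions from strictly $L^+$ to strictly $L^-$.

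For each $p \in \{x, y, z\}$, let $m_p$ denote this crossing-pivot index. Relabelling if necessary, assume $m_x < m_y < m_z$, and set $k := m_x$ and $l := m_y$. Then $v_k = x \in \{x, y, z\}$, and because neither $y$ nor $z$ has undergone its crossing pivot by step $k$, both remain in $L^+_k$. Combined with $x \in L^+_k$ (since $v_k = x$ lies on $L_k$), this gives $x, y, z \in L^+_k$. The containment $x, y, z \in L^-_j$ is part of the hypothesis.

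Next consider step $l = m_y$. The point $y = v_l$ lies on $L_l$; the point $x$ has already crossed at $k < l$ and is therefore in $L^-_l$; and the point $z$ has not yet crossed and is still in $L^+_l$. Since $v_l = y$, general position implies $x$ and $z$ are not on $L_l$, so $x$ is strictly in $L^-_l$ and $z$ is strictly in $L^+_l$. The line $L_l$ thus passes through the vertex $y$ of $\bigtriangleup xyz$ and strictly separates the other two vertices; consequently it meets the opposite edge $xz$ at an interior point and hence crosses $\bigtriangleup xyz$. Finally $i \leq k = m_x < m_y = l < m_z \leq j$ gives the required range.

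The main obstacle will be to justify rigorously the existence and uniqueness of the crossing pivot $m_p$ for each $p \in \{x, y, z\}$. The two transition cases (a) and (b) in the rotation procedure allow a point to be momentarily a pivot without switching sides (a ``bounce'', in which case (a) is immediately followed by case (a) again), so not every pivot step is a crossing. A careful case-analysis of the transitions is required to show that, given the net side-change of $p$ from $L^+$ to $L^-$ over $[i, j]$, there is a well-defined crossing step for each of $x, y, z$ in this range. Boundary situations where one of $x, y, z$ coincides with $v_i$ or $v_j$ must also be handled, but the geometric conclusion that $L_l$ passes through $y$ and separates $x$ from $z$ is robust in all these cases.
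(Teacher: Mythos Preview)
Your argument follows the same idea as the paper's one-line proof---both rest on the fact that at each step at most one of $x,y,z$ can leave $L^+$---but your formalisation through per-point ``crossing pivots'' $m_p$ introduces a genuine gap that you flag but do not close. The problem is only with your choice of $l=m_y$: once $x$ has crossed to $L^-$ at step $m_x$, nothing in the transition rules prevents $x$ from becoming a pivot again and re-entering $L^+$ before step $m_y$. So the assertion ``$x$ has already crossed at $k<l$ and is therefore in $L^-_l$'' is not justified, and without it you cannot conclude that $L_l$ separates $x$ from $z$.

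The cleanest repair is to drop the per-point indices and argue globally, which is what the paper's sentence is really saying. Take $k$ to be the \emph{largest} index in $[i,j)$ with $\{x,y,z\}\subset L^+_k$; such $k$ exists since $\{x,y,z\}\subset L^+_i$ while at most one of the three (namely $v_j$, if at all) lies in $L^+_j\cap L^-_j$. Because exactly one point leaves $L^+$ in the step $k\to k+1$, and that point must be $v_k$, we get $v_k\in\{x,y,z\}$. Now simply set $l=k+1$: one of $x,y,z$ is strictly in $L^-_l$, the other two lie in $L^+_l$, and at most one of those two can be the pivot $v_l$, so $L_l$ has a vertex of $\triangle xyz$ strictly on each side and therefore crosses it. Finally $l<j$, since $l=j$ would force two of $x,y,z$ into $L^+_j\cap L^-_j=\{v_j\}$. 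Your choice of $k$ already coincides with this (the smallest $m_p$ is exactly the last index with all three on the right), so the only change needed is to take $l=k+1$ rather than $l=m_y$; then no uniqueness or bouncing analysis is required.
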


\begin{proof}

Consider the lines $L_i, L_{i+1}, \ldots, L_j$. The result follows
from the fact that at each step $t$, at most one of the points
$x,y,z$ switches from $L^+_{t}$ to $L^-_{t+1}$.  See Fig. \ref{lema}

\end{proof}

\begin{figure}[h!!]
\begin{center}
\includegraphics{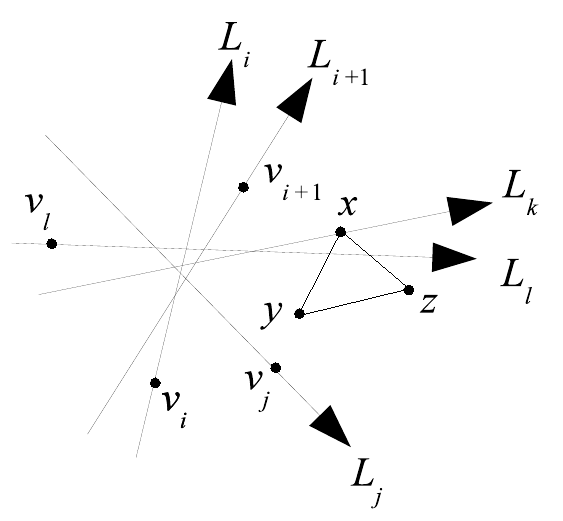}
\caption{$x, y, z \in  L^+_i  \cap L^+_{i+1} \cap \ldots \cap  L^+_k $, $x=v_k$ and $L_l$ crosses  $\{x,y,z\}$. }
\label{lema}
\end{center}
\end{figure}


Let $G$ be a geometric graph with $n\geq3$ vertices such that $s(G)
\leq n-3$ and let $P$ denote the vertex set of $G$. If $n=3$ or $n=4$, it is not
difficult to verify by inspection that $G$ has a plane spanning tree. Let us
proceed with the proof of Theorem~\ref{mainresult} by induction and
assume $n \geq 5$ and that the result is valid for each geometric
subgraph of $G$ with $k$ vertices, where $3 \leq k \leq n-1 $.

Let $v_1$ be a point in $P$ and $L_1$ be an oriented line through
$v_1$ such that $|L_1^-| =\lceil \frac{n+1}{2} \rceil$ and $|L_1^+|
= \lfloor \frac{n+1}{2} \rfloor$. Let $C(L)$ be the set of oriented
lines obtained from $L=L_1$ as above.

For every $i \ge 1$, define $G^-_i$ and $G^+_i$ as
the geometric subgraphs of $G$ induced by $L^-_i$ and $L^+_i$ respectively, and
$G^-(v_{i}, v_{i+1})$ and $G^+(v_{i}, v_{i+1})$ as
the geometric subgraphs of $G$ induced by $L^-(v_{i}, v_{i+1})$ and $L^+(v_{i}, v_{i+1}))$,
respectively.

We show there is a line in $C(L)$ for which induction applies to the corresponding graphs $G^-$ and $G^+$,
giving plane spanning trees $T^-$ of $G^-$ and $T^+$ of $G^+$. As $T^-$ and $T^+$ lie in opposite sides of $L$,
their union contains a plane spanning tree of $G$. We analyse several cases.

\medskip

\noindent \textbf{Case 1.} $s(G^{-}_1) \leq |L^{-}_1| - 3$ and
$s(G^{+}_1) \leq |L^{+}_1| - 3$.

By induction there exist plane spanning trees $T^-_1$ of $G_1^-$ and $T^+_1$ of
$G_1^+$.  Since $T^-_1$ and $T^+_1$ lie in opposite sides of $L_1$ and contain exactly one point in common,
the graph $T^-_1 \cup T^+_1$ is a plane spanning tree of $G$.

\medskip
\noindent \textbf{Case 2.} $s(G^{-}_1) \geq |L^{-}_1| - 2$ and
$s(G^{+}_1) \geq |L^{+}_1| - 2$.

Clearly  $s(G^{-}_1) + s(G^{+}_1) \geq  (|L^{-}_1|-2) +
(|L^{+}_1|-2) = n-3 \geq s(G) \geq s(G^{-}_1) + s(G^{+}_1).$ This implies $s(G^{-}_1) = |L^{-}_1| - 2$, $s(G^{+}_1) = |L^{+}_1| - 2$
and that $L_1$ does not cross any disconnected empty triangle of $G$.

Consider the line $L_m$ in $C(L)$ parallel to $L_1$ with opposite
orientation. As $L_1^+ \subset L_m^-$, any disconnected empty
triangle of $G^+_1$ is also a disconnected empty triangle of
$G^-_m$. By Lemma \ref{crosses}, there exists a line in $C(L)$ that
crosses a disconnected empty triangle of $G$. Let $j$ be the
smallest integer such that $L_{j+1}$ crosses a disconnect empty
triangle $\bigtriangleup x y z$ of $G$.

Since  $L_1, L_2,\ldots, L_j$ do not cross any disconnected empty
triangle of $G$, it follows that $s(G^{-}_j) = s(G^{-}_1)= |L^{-}_1|
- 2= |L^{-}_j| - 2$ and that $s(G^{+}_j) = s(G^{+}_1)= |L^{+}_1| -
2= |L^{+}_j| - 2$. Moreover, also by  Lemma \ref{crosses}, the axis vertex  $v_j$ of $L_j$ must be
one of the vertices $x$, $y$, or $z$, since $L_{j+1}$ crosses
$\bigtriangleup x y z$ while $L_1, L_2,\ldots, L_j$ do not. Without
loss of generality we assume $z = v_j$. See Fig.~\ref{crosses-mod}.

\begin{figure}[h!!]
\begin{center}
\includegraphics{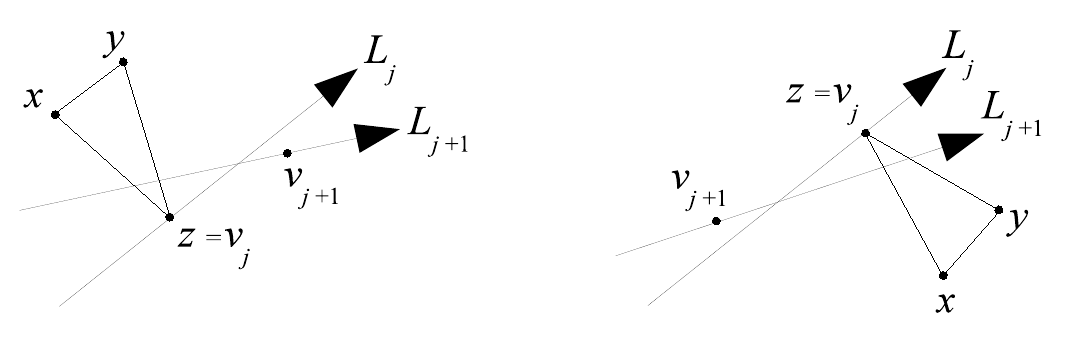}
\caption{$L_1,L_2, \ldots , L_j$ do not cross any disconnected empty
triangle of $G$ and $L_{j+1}$ crosses a disconnected empty triangle
$\bigtriangleup x y v_j$ of $G$. } \label{crosses-mod}
\end{center}
\end{figure}

\medskip

\noindent \textbf{Case 2.1.} $v_{j+1} \in L_j^+$.

In this case $\bigtriangleup x y z$ is a disconnected empty triangle
of $G^-_j$, see Fig.~\ref{crosses-mod} (left) and Fig.~\ref{derecha}. Let $i \geq j+1$ be the smallest
integer such that the axis vertex $v_{i+1}$ of  $L_{i+1}$ lies in
$L_{j}^-$. By the choice of $i$, all points $v_{j+1}, v_{j+2},
\ldots, v_{i}$ lie in $L_j^+$ and therefore $L_i^+ =L_{i-1}^+ =
\dots = L_{j}^+$. It follows that $G^+_i=G^+_{i-1}= \dots =G^+_j$
and that $s(G^+_i)=s(G^+_{i-1})= \dots = s(G^+_{j})$.

\begin{figure}[h!!]
\begin{center}
\includegraphics[height=7cm, width=11cm]{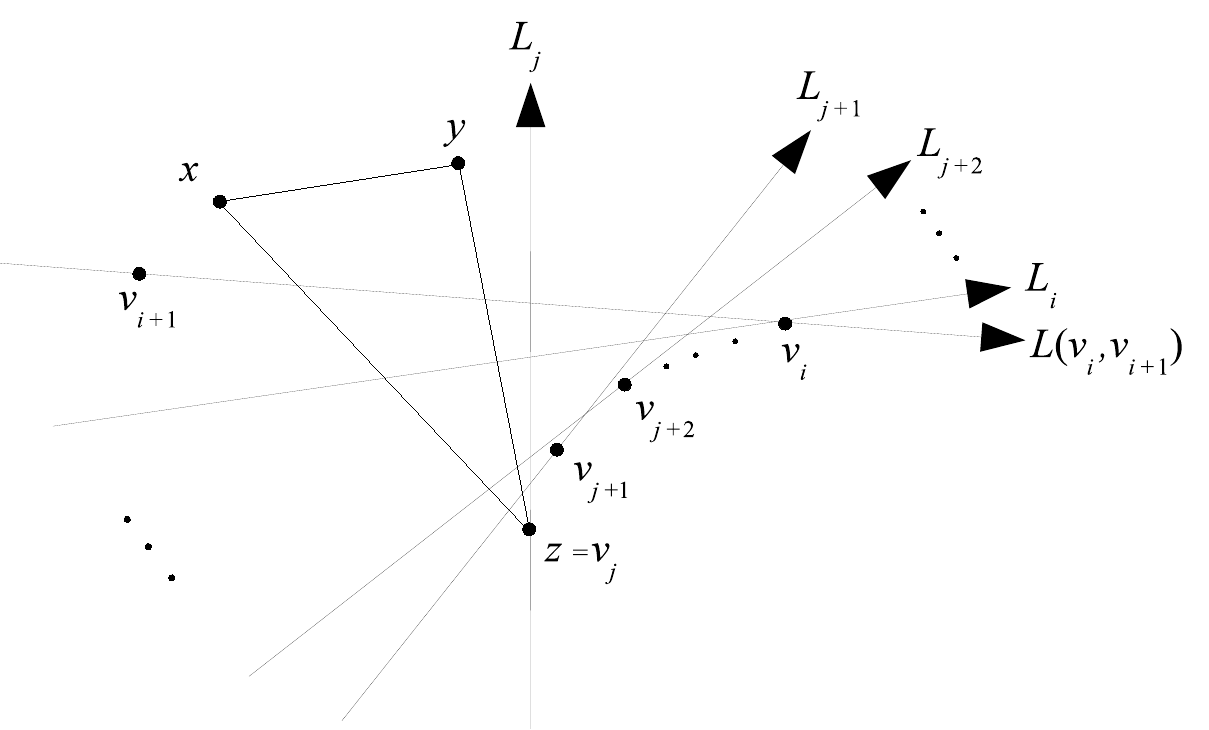}
\caption{$v_{j+1}, v_{j+2}, \ldots, v_{i}  \in L_1^+$, $L_{i+1} \in L_{1}^-$.}
\label{derecha}
\end{center}
\end{figure}

Again by the choice of $i$, $L_{k+1}^- =(L_k^- \setminus \{v_k\})
\cup  \{v_{k+1}\}$ for $k=j,j+1,\ldots,i-1$ and therefore $L_i^-
=(L_j^- \setminus \{v_j\}) \cup  \{v_i\}$. Moreover, all lines
$L_{i}, L_{i-1}, \dots, L_{j+1}$ cross $\bigtriangleup x y z$. This
implies $s(G^-_i)\leq s(G^-_j)-1$  since $\bigtriangleup x y z$ is a
disconnected empty triangle of $G^-_j$.

Now consider the line $L(v_{i},v_{i+1})$  and notice that
$L^{-}(v_{i},v_{i+1}) =  L^{-}_{i} $ and $L^{+}(v_{i},v_{i+1}) =
L^{+}_{i} \cup \{v_{i+1}\}$ because $v_{i+1} \in L_{i}^-$. Therefore
 $$|L^{-}(v_{i},v_{i+1})| =  |L^{-}_{i}| = |L^{-}_{j}| \quad \mbox{and} \quad  |L^{+}(v_{i},v_{i+1})| = |L^{+}_{i}| + 1 = |L^{+}_{j}| + 1$$

Also notice that $s(G^{-}(v_{i},v_{i+1})) = s(G^{-}_{i})$ and
$s(G^{+}(v_{i},v_{i+1})) = s(G^{+}_{i})$ because no empty triangle
of $G$ contained in $L^{+}(v_{i},v_{i+1})$ has $v_{i+1}$ as one of
its vertices since $L_j$ does not cross any empty triangle of $G$.
Therefore
$$s(G^{-}(v_{i},v_{i+1})) = s(G^{-}_{i})= s(G^{-}_{j})-1 =(|L^{-}_{j}| -2)-1=|L^{-}_{j}| -3=
|L^{-}(v_{i},v_{i+1})|-3$$
$$\quad \mbox{and} \quad$$
$$s(G^{+}(v_{i},v_{i+1})) = s(G^{+}_{i}) = s(G^{+}_{j})= |L^{-}_{1}| -2 = (|L^{+}(v_{i},v_{i+1})|-1)-2= |L^{+}(v_{i},v_{i+1})|-3.$$

By  induction, there exist plane spanning trees $T^-$ of $G^-(v_{i}, v_{i+1})$
and $T^+$ of $G^+(v_{i}, v_{i+1})$. The theorem follows since
$T^-\cup T^+$ contains a plane
spanning tree of $G$.

\medskip

\noindent \textbf{Case 2.2.} $v_{j+1} \in L_j^-$.

In this case $\bigtriangleup x y z$ is a disconnected
empty triangle of $G^+_j$, see Fig.~\ref{crosses-mod} (right). The proof is analogous to that of
Case 2.1.

\medskip

\noindent \textbf{Case 3.} $s(G^{-}_1)\geq |L^{-}_1| - 2$ and
$s(G^{+}_1) \leq |L^{+}_1| - 3$.

If for every $L_j \in C(L)$, $$s(G^{-}_j) \geq |L^{-}_j| - 2 \quad
\mbox{and} \quad s(G^{+}_j) \leq |L^{+}_j| - 3,$$ then for $L_m$ in
particular, the line in $C(L)$ parallel to $L_1$ with the
opposite orientation, we have that $$ s(G^{-}_m) \geq |L^{-}_m| - 2
\quad \mbox{and} \quad s(G^{+}_m) \leq |L^{+}_m|- 3. $$

If $n$ is odd, then $L_1$ and $L_m$ are the same line but with
opposite orientations, in which case $L^{-}_m=L^{+}_1$ and
$L^{+}_m=L^{-}_1$. It follows that
$$|L^{+}_m|- 2 = |L^{-}_1|- 2 \leq s(G^{-}_1) = s(G^{+}_m) \leq |L^{+}_m|- 3,$$
which is not possible.

If $n$ is even, then $L_1$ and $L_m$ are parallel lines with
opposite orientations, with $L_m$ to the left of $L_1$ and with $|L^{+}_1|+|L^{+}_m| = n$. This
implies that there are no points between $L_1$ and $L_m$. Therefore every
empty triangle of $G^-_1$ contains points in $L^+_m$ and every empty triangle of
$G^-_m$ contains points in $L^+_1$. Thus no empty triangle of  $G^-_1$
is also an empty triangle of  $G^-_m$, see Fig~\ref{triplets}.

\begin{figure}[h!!]
\begin{center}
\includegraphics{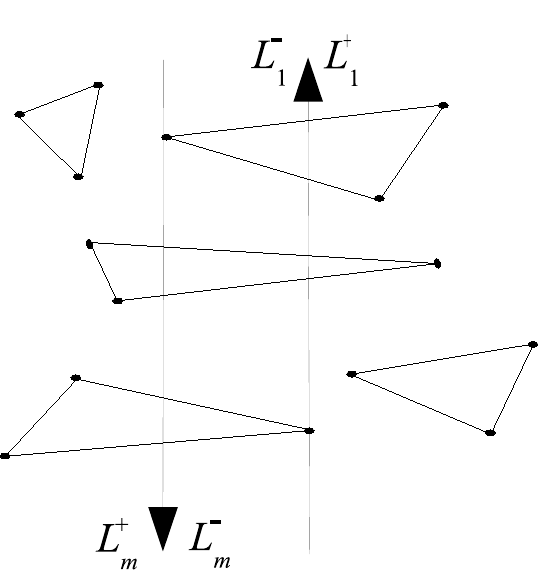}
\caption{No empty triangle of $G$ is contained in $L^-_1 \cap L^-_m$.}
\label{triplets}
\end{center}
\end{figure}

It follows that $s(G^{-}_1) + s(G^{-}_m) \leq s(G)$ which is also a
contradiction since
$$s(G) \leq n-3 < n -2 = |L^{-}_1|- 2 + |L^{-}_m|- 2 \leq s(G^{-}_1) + s(G^{-}_m).$$

Therefore, there exists $L_k \in C(L)$ such that
$$s(G^{-}_k)\geq |L^{-}_k| - 2 \quad \mbox{and} \quad s(G^{+}_k) \leq |L^{+}_k| - 3,$$
while
$$s(G^{-}_{k+1})\leq |L^{-}_{k+1}| - 3 \quad \mbox{or} \quad s(G^{+}_{k+1}) \geq |L^{+}_{k+1}| - 2.$$

Since $L^-_{k+1}=L^-_k$ or $L^+_{k+1}=L^+_k$, it must happen that  either

$$s(G^{-}_{k+1}) \leq |L^{-}_{k+1}| - 3 \quad \mbox{and} \quad s(G^{+}_{k+1}) \leq |L^{+}_{k+1}| - 3$$
$$\quad \mbox{or} \quad$$
$$s(G^{-}_{k+1}) \geq |L^{-}_{k+1}| - 2 \quad \mbox{and} \quad s(G^{+}_{k+1}) \geq  |L^{+}_{k+1}| - 2$$
which are Case 1 and Case 2, respectively.

\medskip

\noindent \textbf{Case 4.}  $s(G^{-}_1) \leq |L^{-}_1| - 3$ and
$s(G^{+}_1) \geq |L^{+}_1| - 2$

As above, let $L_m$ be the line in $C(L)$ parallel to $L_1$ with opposite orientation. If  $n$ is odd, then $L^{-}_m=L^{+}_1$ and
$L^{+}_m=L^{-}_1$. Therefore $s(G^{-}_m)\geq |L^{-}_1| - 2$ and
$s(G^{+}_1) \leq |L^{+}_m| - 3$ which is Case 3.

For $n$ even additional considerations are needed. For the sake of completeness we include the entire proof for this subcase.

If $s(G^{-}_j) \leq |L^{-}_j| - 3$ and $s(G^{+}_j) \geq
|L^{+}_j| - 2$ for every $L_j \in C(L)$, then,   $s(G^{-}_m) \leq
|L^{-}_m| - 3 = l-2$ and $s(G^{+}_m) \geq |L^{+}_m| - 2$. As $L^{+}_1 \subset L^{-}_m$ and $L^{+}_m \subset L^{-}_1$, we have,
 $$|L^{+}_1| - 2 \leq s(G^{+}_1) \leq s(G^{-}_m) \leq |L^{-}_m| - 3 = |L^{+}_m| - 2$$
 $$\quad \mbox{and} \quad$$

 $$|L^{+}_m| - 2 \leq s(G^{+}_m) \leq s(G^{-}_1)\leq |L^{-}_1| - 3 = |L^{+}_1| - 2$$
which implies  $s(G^{+}_1) = s(G^{-}_1) = s(G^{+}_m) = s(G^{-}_m)$,
since $|L^{+}_1|=|L^{+}_m|$.

It follows that no disconnected empty triangle $\bigtriangleup x y z$ of
$G^{-}_1$ has $v_1$ as one of its vertices, otherwise $L_m$ must
cross $\{x,y,z\}$ in which case $s(G^+_m)<s(G^-_1)$ because $G^+_m$
is a subgraph of $G^-_1$.

By our assumption, the same argument can be applied to every line $L_j $ in
$C(L)$ and therefore  for each graph  $G^{-}_j$,  no disconnected
empty triangle of $G^{-}_j$ has $v_j$ as one of its vertices.

To reach a contradiction consider any disconnected empty triangle
$\bigtriangleup x y z$  of $G^{+}_1$.  As $L_m$ is parallel to $L_1$ and to the
left of $L_1$, then $\bigtriangleup x y z$ is also a disconnected empty triangle
of  $G^{-}_m$ and therefore $\bigtriangleup x y z$ lies to the right of $L_1$ and to
the left of $L_m$. By Lemma \ref{crosses}, there is a line $L_t$ in
$C(L)$ with $1 < t < m$ such that $\bigtriangleup x y z$ is a disconnected
empty triangle of $G^{+}_t$ and one of its vertices is precisely
$v_t$, which is the contradiction, see Fig.~\ref{tangente}.

\begin{figure}[h!!]
\begin{center}
\includegraphics[height=7cm, width=11cm]{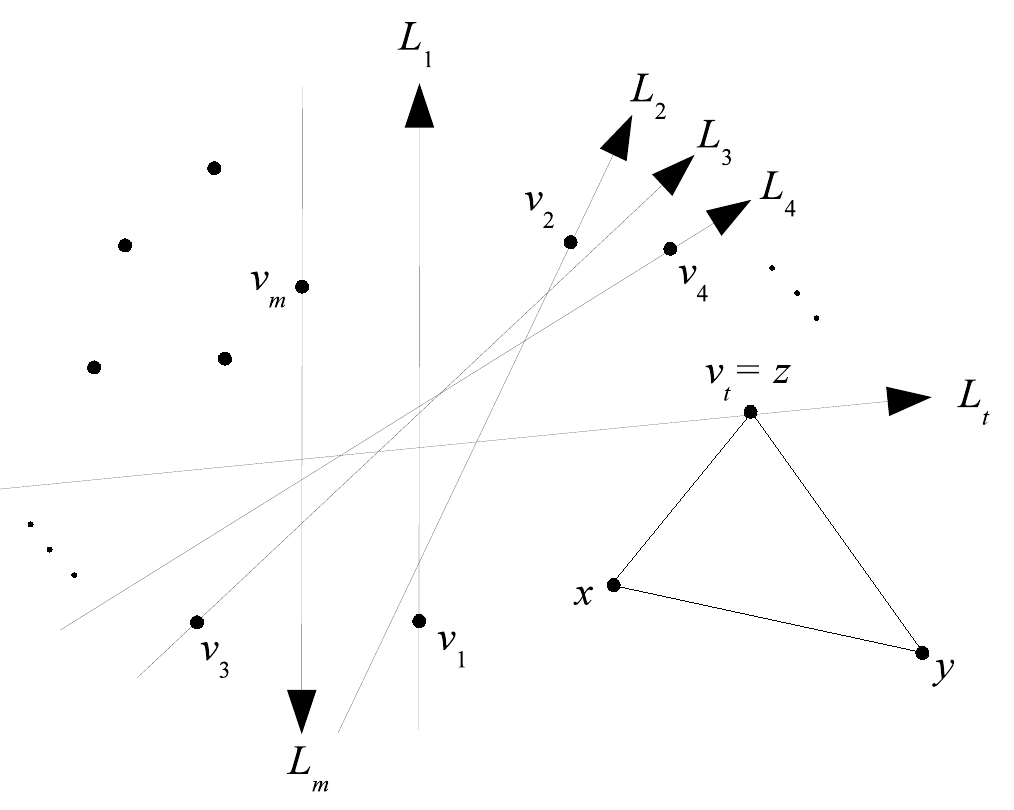}
\caption{$v_t = z$.} \label{tangente}
\end{center}
\end{figure}

As in Case 3, there is a line $L_k$ in $C(L)$ such that
$$s(G^{-}_k) \leq |L^{-}_k| - 3 \quad \mbox{and} \quad s(G^{+}_k)
\geq |L^{+}_k| - 2,$$
while
 $$s(G^{-}_{k+1}) \geq |L^{-}_{k+1}| - 2
\quad \mbox{or} \quad s(G^{+}_{k+1}) \leq |L^{+}_{k+1}| - 3 $$

Again, since $L^-_{k+1}=L^-_k$ or $L^+_{k+1}=L^+_k$, it must happen
that either
$$s(G^{-}_{k+1}) \leq |L^{-}_{k+1}| - 3 \quad
\mbox{and} \quad s(G^{+}_{k+1}) \leq |L^{+}_{k+1}| - 3$$
 $$\quad \mbox{or} \quad$$
$$s(G^{-}_{k+1}) > |L^{-}_{k+1}| - 3 \quad \mbox{and} \quad
s(G^{+}_{k+1}) >|L^{+}_{k+1}| - 3$$ which are Case 1 and Case 2,
respectively. This ends the proof of Theorem~\ref{mainresult}.

\hfill $ \qed $

\section{Final Remark}

For $n \geq 5$, let $v_1, v_2, \dots, v_{n-1}$ be the vertices of a
regular $(n-1)$-gon and let $w$ be a point closed to $v_{n-1}$ and
in the interior of the triangle $\triangle v_{n-3}v_{n-2}v_{n-1}$.
Denote by $R_n$ and $R^c_n$ the plane path $v_1, v_2, \dots, v_{n-1}, w$ and its complement, respectively. The
geometric graph $R^c_n$ is such that $s(R^c_n) = n-3$ and both
graphs $R_n$ and $R^c_n$ contain plane spanning trees. This shows
that Theorem~\ref{mainresult} is not (at least not an immediate)
consequence  of the  result by K\'{a}rolyi \emph{et al} mentioned
above.

\section{Acknowledgments}

We thank the anonymous referees for their suggestions that help us to improve the organisation and readability of the paper.

\end{document}